\documentclass[11pt]{article}
\usepackage[margin=1in]{geometry}
\usepackage[utf8]{inputenc}
\usepackage[T1]{fontenc}
\usepackage[english]{babel}
\usepackage{lmodern}
\usepackage{graphicx}
\usepackage{wrapfig}
\usepackage[subrefformat=parens,labelformat=parens]{subfig}
\usepackage{caption}
\usepackage{paralist}
\usepackage{enumitem}

\graphicspath{{./figures/}}

\makeatletter
\renewcommand{\p@enumii}{}
\makeatother

\newcommand{\titel}{A note on cycles in cyclically $4$-edge-connected cubic planar graphs}
\usepackage[usenames]{color}
\definecolor{hellblau}{rgb}{0.2,0.4,1} 
\definecolor{dunkelblau}{rgb}{0,0,0.8}
\definecolor{dunkelgruen}{rgb}{0,0.5,0}
\usepackage[
	pdftex,
	colorlinks,
	linkcolor=dunkelblau,
	urlcolor=dunkelblau,
	citecolor=dunkelgruen,
	bookmarks=true,
	linktocpage=true,
	pdfauthor={On-Hei Solomon Lo},
	pdfsubject={}
]{hyperref}
\usepackage{pdfpages}
\usepackage{amsmath}
\usepackage{amsthm}
\usepackage{amssymb}

\allowdisplaybreaks
\usepackage{amsfonts}
\theoremstyle{plain}
	\newtheorem{satz}{Satz}[]
	\newtheorem*{theorem}{Theorem}
	\newtheorem*{lemma}{Lemma}
	\newtheorem*{TEL}{The Three Edge Lemma}
	
	\newtheorem*{proposition}{Proposition}
		\newtheorem{claim}[satz]{Claim}
\theoremstyle{remark}
	
\theoremstyle{definition}
	
	\newtheorem*{conjecture}{Conjecture}
	
	\newtheorem*{bondyconjecture}{Bondy's conjecture}
	\newtheorem*{malkevitchconjecture}{Malkevitch's conjecture}

\newenvironment{proofblack}[1][Proof]{\par
  \pushQED{\qed}  
  \begin{proof}[#1] 
}{%
  \end{proof} 
}

\makeatletter
\newcommand{\setword}[2]{%
	\phantomsection
	#1\def\@currentlabel{\unexpanded{#1}}\label{#2}%
}
\makeatother

\begin{document}
	\title{\titel}
		\author{
		On-Hei Solomon Lo\thanks{School of Mathematical Sciences, 
			Key Laboratory of Intelligent Computing and Applications (Tongji University), Ministry of Education, Tongji University, Shanghai 200092, China. This research was supported by the Fundamental Research Funds for the Central Universities.
			}\\
		}
	\date{}
	\maketitle

\begin{abstract}
	Let $H$ be obtained from a cyclically $4$-edge-connected cubic planar graph $Y$ other than $K_4$ by deleting two adjacent vertices. We provide a short proof that if $H$ has circumference at least $k$ for some even integer $k \ge 4$, then $H$ contains a cycle of length between $k$ and $3k/2$.
	
As a consequence, we show that the line graph \( G \) of \( Y \) contains a cycle of length \( l \) avoiding any prescribed vertex of \( G \), for every \( l \in \{3\} \cup \{5, \dots, |V(G)| - 1\} \). 

The proofs integrate Euler's formula and the Three Edge Lemma, established by Thomas and Yu, and independently by Sanders, in a novel way. This work was partially motivated by conjectures of Bondy and Malkevitch.
	\\
	\\
	{\bf Keywords.} Cyclically $4$-edge-connected cubic planar graph, cycle spectrum, pancyclic
\end{abstract}

\sloppy

\subsection*{Introduction}

A graph \( G \) with \( n \) vertices is called \emph{hamiltonian} if it contains a cycle that visits every vertex exactly once. Such a cycle is called a \emph{Hamilton cycle}. We say $G$ is \emph{pancyclic} if it contains cycles of all lengths from 3 to \( n \), and \emph{almost pancyclic} if it contains cycles of all such lengths except possibly one even length.

Whitney~\cite{Whitney1931} proved that a planar triangulation is hamiltonian if it is $4$-connected.
Tutte~\cite{Tutte1956} extended Whitney’s theorem by proving that every $4$-connected planar graph is hamiltonian. In fact, Tutte’s result implies that every vertex-deleted subgraph of a $4$-connected planar graph is hamiltonian (see~\cite{Plummer1975}).

Motivated by Bondy's metaconjecture that a sufficient condition for hamiltonicity almost always implies pancyclicity, one might ask whether $4$-connected planar graphs are pancyclic. However, Malkevitch~\cite{Malkevitch1971} showed that this is not always the case by proving that the line graph of a cyclically $4$-edge-connected cubic planar graph of girth 5 lacks a $4$-cycle. In light of this, Bondy~\cite{Bondy1975} in 1973 and Malkevitch~\cite{Malkevitch1988} in 1988 proposed the following conjectures.

\begin{bondyconjecture}
	Every $4$-connected planar graph is almost pancyclic.
\end{bondyconjecture}

\begin{malkevitchconjecture}
	Every $4$-connected planar graph is pancyclic if it contains a $4$-cycle.
\end{malkevitchconjecture}

It is known that every $4$-connected planar graph on \( n \) vertices contains an \( l \)-cycle for any \( l \in \{3, 5, 6\} \cup \{\lfloor \frac{n}{2} \rfloor, \dots, \lceil \frac{n}{2} \rceil + 3\} \cup \{n - 7, \dots, n\} \) \cite{Wang2002, Fijavz2002, Lo2021, Cui2009, Chen2004, Sanders1996, Thomas1994, Plummer1975, Thomassen1983, Tutte1956}. Weaker versions of Bondy's and Malkevitch's conjectures have been considered in \cite{Malkevitch1978, Chen2004}. However, we propose the following conjecture, which, if true, would establish both Bondy's and Malkevitch's conjectures.

\begin{conjecture}
	Let $G$ be a $4$-connected planar graph on $n$ vertices. For any $v \in V(G)$, there exists an $l$-cycle in $G - v$ for any $l \in \{3\} \cup \{5, \dots, n - 1\}$.
\end{conjecture}

As shown in \cite{Hakimi1979}, every planar triangulation contains cycles of all lengths up to its circumference. Thus every $4$-connected planar triangulation is pancyclic (see also \cite{Lo2025}) and, in fact, satisfies our conjecture (as vertex-deleted subgraphs of a $4$-connected planar graph are hamiltonian).

This note presents a short proof of our conjecture for the line graphs of cyclically $4$-edge-connected cubic planar graphs.

\begin{theorem}
	Let \( G \) be the line graph of a cyclically $4$-edge-connected cubic planar graph. Denote $n := |V(G)|$. For any \( v \in V(G) \), there exists an \( l \)-cycle in \( G - v \) for every \( l \in \{3\} \cup \{5, \dots, n - 1\} \).
\end{theorem}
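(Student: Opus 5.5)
The plan is to translate cycles in $G - v$ into structures in $Y$ and to use the abstract's two results: the cycle-length result for $H$ and the Three Edge Lemma. Write $G = L(Y)$, where $Y$ is cyclically $4$-edge-connected, cubic and planar, with $m = |E(Y)| = n$ and $|V(Y)| = 2n/3$. The vertex $v$ of $G$ is an edge $e = xy$ of $Y$. If $Y = K_4$, then $G$ is the octahedron with $n=6$, and the required lengths $3$ and $5$ are easily found in $G-v$, so I will assume $Y \neq K_4$.

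A cycle $C$ of $Y$ of length $c$, together with a set $S$ of edges of $Y$ not on $C$ but incident with $C$, yields a cycle in $L(Y)$ of length $c + |S|$. To build it, traverse the edges of $C$ in order; at each vertex $u$ of $C$ whose third edge $f$ lies in $S$, detour through $f$. This works because the two $C$-edges at $u$ and $f$ form a triangle in $L(Y)$. Each edge of $S$ must be attached at one endpoint on $C$; a chord may be attached at either end, but only once. Hence any length between $c$ and $c + |N|$ is realizable, where $N$ is the set of edges with at least one endpoint on $C$ but not on $C$. To avoid $v = e$, I choose $C$ with $e \notin E(C)$ and never place $e$ in $S$. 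In particular, if $C$ avoids both $x$ and $y$, then $e$ is not in play at all. Triangles of $G$ correspond to vertices of $Y$, and there are enough of them to avoid $e$, so $l = 3$ is immediate. The whole task therefore reduces to finding, for each $l$, a cycle $C$ in $Y - e$ with $|C| \le l \le |C| + |N(C) \setminus \{e\}|$. Since $C$ spans $|C|$ vertices, each with exactly one non-cycle edge, $|N(C)| = |C| - (\text{number of chords})$, so the achievable interval is roughly $[c, 2c]$.

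\textbf{Long lengths.} Using the Three Edge Lemma (any two, respectively three, suitable edges of a cyclically $4$-edge-connected cubic planar graph lie on a Hamilton cycle, or the cycle avoids a prescribed edge), I will take a Hamilton cycle $C$ of $Y$ avoiding $e$. This gives $c = 2n/3$, no chords other than $e$, and $|N \setminus \{e\}| = n/3 - 1$. The interval is then $[2n/3, n-1]$, which covers exactly the top lengths up to $n-1$.

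\textbf{Small and middle lengths.} Here I use $H = Y - x - y$ and the stated result: if $H$ has circumference at least $k$ ($k$ even, $k \ge 4$), then $H$ has a cycle of length in $[k, 3k/2]$. A cycle $C$ in $H$ avoids $x$ and $y$, so it avoids $e$ and all its edges. Such a cycle of length $c$ gives every length in $[c, c + c - \mathrm{chords}]$. I induct upward over even $k$. Given that lengths up to some bound are covered, I pick the next even $k$ and obtain $c \in [k, 3k/2]$. Since the window $[c, 2c - (\text{chords})]$ starts no later than $3k/2$, consecutive windows must overlap.

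The main obstacle is controlling the chords, so that a cycle of length $c$ really yields about $2c$ rather than $c$. To handle this, I would use planarity together with Euler's formula to choose $C$ as a face-like or chordless cycle: in a cyclically $4$-edge-connected cubic graph, a chord $uw$ of $C$ splits $C$ into two cycles through a short cut. I would replace $C$ by a shorter chordless sub-cycle, or else count chords via Euler's formula on the disc bounded by $C$ to show that at most about $c/2$ chords occur inside. This yields windows $[c, c + \lceil c/2 \rceil]$, and a ratio of $3/2$ is exactly what makes consecutive windows chain.

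The circumference of $H$ is at least about $2n/3 - 2$: the Three Edge Lemma gives a Hamilton cycle of $Y$ through $e$, and rerouting it yields a long path, hence a cycle of $H$. Starting from a $5$- or $6$-cycle (a face of $Y$ avoiding $x$ and $y$, which gives lengths $5, 6, \dots$) and chaining windows up to the top range $[2n/3, n-1]$ then covers every $l \in \{5, \dots, n-1\}$. Checking that the final window of the $H$-chain reaches $2n/3$ is the delicate bookkeeping step; it may need the vertices adjacent to $x$ and $y$ as extra attachable edges.
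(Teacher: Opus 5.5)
\textbf{Gap 1: $Y$ need not have a Hamilton cycle.} Your long-range step and your bound on the circumference of $H$ both rest on a Hamilton cycle of $Y$, avoiding or passing through $e$. No such cycle need exist. Cyclically $4$-edge-connected cubic planar graphs can be non-hamiltonian; Grinberg's cyclically $5$-edge-connected example is one. The Three Edge Lemma also does not say what you paraphrase. In a cubic graph it only gives a cycle whose leftover components attach to at most three cycle vertices, which typically leaves isolated vertices off the cycle. Two smaller slips: every non-cycle edge of a Hamilton cycle of $Y$ is a chord, so ``no chords other than $e$'' is wrong, though your count $n/3-1$ is right. And deleting $x,y$ from a Hamilton cycle through $xy$ gives a path of $H$, not a cycle.

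The paper handles the long range in $G$ instead. It takes the triangle $e_1e_2e_3$ of $G$ with $e_1,e_2$ at $v$, and applies the lemma to $G-e_3$ with $e_1,e_2$ prescribed. Since $G$ is $4$-connected, nothing can be left over, so this is a Hamilton cycle of $G$ in which $v$ is the center of a $2$-triangle. Then $n=2\tau_2+\tau_1$ and $\tau_2=3\tau_0+\tau_1$ give $\tau_2\ge n/3$. Bypassing $v$ and then further $2$-triangle centers gives all lengths in $[2n/3,n-1]$. In your language, this Hamilton cycle of $G$ is a dominating cycle of $Y$ avoiding $e$, with $e$ used as a detour; that is what your window argument actually needs. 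For the circumference of $H$, the paper applies the lemma to $H$ itself, with edges at three of its four degree-$2$ vertices. Every off-cycle vertex is then isolated and sends three edges to the cycle, except possibly the fourth degree-$2$ vertex. This gives $\ell(C)\ge 3|V(H)\setminus V(C)|+2$, i.e.\ $\ell(C)\ge n/2-1$, which suffices for $k=2\lfloor l/3\rfloor$ with $l<2n/3$.

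\textbf{Gap 2: the windows do not chain.} The chord issue you call the main obstacle is trivial. Each vertex of $C$ has exactly one edge off $C$, so there are at most $c/2$ chords and $|N|\ge\lceil c/2\rceil$. No Euler-formula count is needed, and passing to a shorter chordless subcycle would lose $c\ge k$. But windows $[c,c+\lceil c/2\rceil]$, with $c$ known only to lie in $[k,3k/2]$, leave gaps. Take $l=3a+1$ or $3a+2$. The Proposition may return $c=2j$ for every $k=2j\le 2a$, with windows ending at $3j\le 3a$. It may return $c=3j\ge 3a+3$ for every $k=2j\ge 2a+2$. Then no window contains $l$.

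What is missing is additive slack: at least three vertices of $C$ must have their third edge leaving $V(C)$. One way to see this: $|\delta_Y(V(C))|=c-2h$, where $h$ is the number of chords. This cut has at least three edges. Exactly three would, by cyclic $4$-edge-connectivity, force $V(Y)\setminus V(C)$ to be a single vertex, whereas it contains $x$ and $y$. Hence $|N|=c-h\ge\lceil (c+3)/2\rceil$. With this bound no induction is needed. For $l=3a+r$ with $a\ge 2$, take $k=2a$; then $c\le 3a\le l\le 3a+2\le c+\lceil (c+3)/2\rceil$. Your closing remark about the neighbours of $x$ and $y$ points in this direction. However, the slack is needed for every $l\not\equiv 0 \pmod 3$, not only at the top of the range.
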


To establish our theorem, we will investigate cycle lengths in cyclically $4$-edge-connected cubic planar graphs. A graph \( G \) is \emph{cyclically $4$-edge-connected} if it is $3$-connected and for every edge-cut \( S \) of \( G \) with fewer than 4 edges, \( G - S \) contains a component without cycles.

The \emph{length} of a cycle, denoted \( \ell(C) \), is its number of edges, and a cycle with \( k \) edges is a \emph{\( k \)-cycle}. In a 2-connected graph, the \emph{girth} is the length of a shortest cycle, and the \emph{circumference} is that of a longest cycle. A \emph{planar graph} can be embedded in the plane without edge crossings, while a \emph{plane graph} specifies a particular embedding. The regions left when a plane graph is removed are \emph{faces}, with the infinite one being the \emph{exterior face} and the rest \emph{interior faces}. The \emph{length} of a face is the number of edge-occurrences in the boundary walk of the face. In a 2-connected plane graph, every face is bounded by a \emph{facial cycle}.

\subsection*{Cycle lengths in a specified interval}

In this section we establish a proposition concerning cycle lengths in cyclically $4$-edge-connected cubic planar graphs, which is essential for proving our theorem.

\begin{lemma} \label{lem:acyclic}
	Let $D$ be a digraph without loops. There exists a spanning subdigraph $D'$ of $D$ such that $D'$ is acyclic (that is, contains no directed cycles) and $d_{D'}^+(v) + d_{D'}^-(v) \ge d_{D}^+(v)$ for all $v \in V(D')$.
\end{lemma}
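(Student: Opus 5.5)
We need a spanning acyclic subdigraph $D'$ such that every vertex keeps total degree at least its original out-degree in $D$. Multiple arcs may be present, since only loops are excluded.

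The plan is a linear ordering argument. Fix any linear order $v_1,\dots,v_n$ of $V(D)$. For an ordering $\sigma$, let $D_\sigma$ be the spanning subdigraph consisting of the arcs of $D$ that go forward, i.e. from an earlier vertex to a later one. Let $D'_\sigma$ be the arcs going backward. Both are acyclic, since any directed cycle would have to use an arc going in each direction.

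A single ordering might not suffice, so I would instead choose $D'$ more cleverly: take the ordering $\sigma$ maximizing the number of forward arcs. For this optimal $\sigma$, I claim $D_\sigma$ works. The idea is a local exchange argument, moving a single vertex $v$ to the very front of the order. Suppose some vertex $v$ has $d^+_{D_\sigma}(v)+d^-_{D_\sigma}(v) < d^+_D(v)$. Moving $v$ to the front makes every arc incident to $v$ start or end at the front position. All out-arcs of $v$ become forward and all in-arcs of $v$ become backward, while arcs not incident with $v$ keep their status. The number of forward arcs incident with $v$ becomes exactly $d^+_D(v)$, whereas before it was $d^+_{D_\sigma}(v)+d^-_{D_\sigma}(v)$. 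Since loops are absent, every arc incident with $v$ has its other end at a different vertex, so this count is correct. The total number of forward arcs therefore strictly increases, contradicting maximality. Hence every vertex satisfies the required inequality. Since $D_\sigma$ is acyclic, taking $D'=D_\sigma$ finishes the proof.

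The main point to check is that moving $v$ to the front leaves the forward/backward status of all other arcs unchanged. This holds because the relative order of the other vertices is preserved. Finiteness of $D$ guarantees that a maximizing ordering exists.
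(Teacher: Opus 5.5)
Your proof is correct, but it takes a different route from the paper's.

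The paper inducts on $|V(D)|$. Because $\sum_v d^+_D(v)=\sum_v d^-_D(v)$, some vertex $v$ has $d^+_D(v)\le d^-_D(v)$. The paper applies the hypothesis to $D-v$ and reinserts $v$ as a sink together with all arcs of $D$ entering it. Then $v$ gets degree $d^-_D(v)\ge d^+_D(v)$, and each other vertex $u$ regains exactly the arcs $u\to v$ it lost in passing to $D-v$. Unwound, the paper's $D'$ is also the forward-arc digraph of a vertex ordering, built greedily from the back.

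You replace the averaging step and the induction with an extremal choice plus the exchange ``move $v$ to the front''. That move makes the number of forward arcs at $v$ exactly $d^+_D(v)$ and does not change the status of any other arc.

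Your route gives a bit more. Moving $v$ to the end makes that count exactly $d^-_D(v)$, so a maximizing ordering actually gives $d^+_{D'}(v)+d^-_{D'}(v)\ge\max\{d^+_D(v),d^-_D(v)\}$. Moreover, every acyclic spanning subdigraph of $D$ with the maximum number of arcs has the required property, since it coincides with the forward arcs of its own topological order.

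The paper's induction is directly an efficient procedure. Yours is too, once you observe that you only need an ordering that no single move improves: each improving move increases the forward-arc count, so local search reaches such an ordering within $|E(D)|$ moves.

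Your opening paragraph on an arbitrary ordering and the backward digraph $D'_\sigma$ is unnecessary, and the name clashes with the target $D'$, but it is harmless.
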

\begin{proof}
	We prove by induction on $|V(D)|$. As it holds for $|V(D)| = 1$, we assume $|V(D)| > 1$. Let $v \in V(D)$ be such that $d_D^+(v) \le d_D^-(v)$. By the induction hypothesis, there is a spanning subdigraph $(D - v)'$ of $D - v$ such that $(D - v)'$ is acyclic and $d_{(D - v)'}^+(u) + d_{(D - v)'}^-(u) \ge d_{D - v}^+(u)$ for all $u \in V(D - v)$. Construct $D'$ from $(D - v)'$ by putting back the vertex $v$ and adding all edges from $E(D)$ that direct from some vertex in $V(D - v)$ to $v$. By the choice of $v$, $D'$ satisfies the required properties.
\end{proof}

\begin{proposition} \label{pro:k,3k/2strong}
	Let $Y$ be a cyclically $4$-edge-connected cubic planar graph with girth at least $4$. Let $H$ be obtained from $Y$ by deleting two adjacent vertices. Let $k \ge 4$ be an even integer. If $H$ has circumference at least $k$, then $H$ has some cycle of length in $[k, 3k / 2]$.
\end{proposition}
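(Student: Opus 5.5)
Let $C$ be a longest cycle of $H$ with $\ell(C) \ge k$, and suppose $\ell(C) > 3k/2$; otherwise there is nothing to prove. The plan is to shorten $C$ step by step until its length first drops into $[k,3k/2]$. Fix a plane embedding of $Y$. Then $H$ is a plane subgraph, and the two deleted adjacent vertices $x,y$ lie in a single face $F_0$ of $H$. That face is the union of the faces of $Y$ incident with $x$ or $y$.

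\textbf{Step 1 (the structure of $C$).} Consider the faces of $H$ inside $C$, say, after possibly inverting the embedding so that $F_0$ lies outside. These faces, together with the chords of $C$ lying inside, give a near-triangulation-like structure: the dual of the interior is a tree $T$. Its nodes are the interior faces, and its edges are the interior chords. Every interior face $f$ with $\ell(f) \ge 4$ is a face of $Y$, because $F_0$ is outside. Since $Y$ has girth at least $4$ and is cubic, Euler's formula gives an average face length below $6$ (precisely $\sum(6-\ell(f)) = 12$). Moreover, the graph inside $C$ has all interior vertices of degree $3$.

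\textbf{Step 2 (peeling).} The main idea is to find a leaf-like region to cut off. Choose a face $f$ adjacent to $C$ such that removing $f$ (replacing the path $f\cap C$ by $f - E(C)$) yields a shorter cycle $C'$ with $\ell(C) - \ell(C') \le k/2$. Iterating from $\ell(C) > 3k/2$ then lands in $[k,3k/2]$, because each drop is at most $k/2$.

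To get the bound on the drop, I would count as follows. Apply Euler's formula to the disk bounded by $C$, using cubicity and $\ell(f)\ge 4$. This shows that many interior faces meet $C$ in a single path, and that the total over all interior faces of the change $|f\cap C| - |f\setminus C|$ is controlled. So some face has a small net change.

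\textbf{Step 3 (where the Three Edge Lemma enters).} To guarantee that removals are possible, I would use the Three Edge Lemma. It provides Hamilton-like cycles in the subgraph bounded by $C$ through prescribed edges, and cyclic $4$-edge-connectivity excludes small cuts. Applied to the subgraph inside a cycle, it should let me reroute through the interior and realise intermediate lengths. To organise the choice of faces, I would orient the adjacency between $C$-faces: direct $f\to g$ when cutting off $f$ would disconnect or destroy $g$'s boundary contact. Lemma~\ref{lem:acyclic} then gives an acyclic subdigraph with large total degree, and an acyclic order yields a sequence of faces whose successive removals keep the current boundary a cycle. Each removal changes the length by at most $\ell(f) - 2$.

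\textbf{Main obstacle.} The hardest step is bounding each individual drop by $k/2$. Large faces of $Y$ could cause big jumps. I would handle this with the circumference hypothesis: if a face $f$ has $\ell(f) > k/2 + 2$, then $f$ itself, or $f$ combined with neighbours via the Three Edge Lemma applied to a subgraph around $f$, already gives a cycle with length in $[k,3k/2]$. I expect this case analysis, made to fit the evenness of $k$ through a parity argument (cycles of $H$ in a bipartite-like face structure), to be the delicate part.
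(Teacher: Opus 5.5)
Your plan has a genuine gap exactly where you flag the main obstacle, and the remedy you propose cannot be made to work locally.

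\textbf{The large-face problem.} Peeling a face $f$ changes the length by roughly $\ell(f)-2$. So the descent from $\ell(C)>3k/2$ only works while a face of length at most $k/2+3$ is available to peel, and you give no reason why one always is. Your fallback is that a face with $\ell(f)>k/2+2$, alone or combined with its neighbours, already gives a cycle with length in $[k,3k/2]$. This is false as a local statement. Suppose $k\ge 10$, $k/2+4\le \ell(f)\le k-1$, and every neighbour $g$ of $f$ has $\ell(g)\ge 3k/2+1$. Then the cycles in $f\cup g$ are $f$, $g$, and a cycle of length $\ell(f)+\ell(g)-2\ge 2k+3$, and all of these miss the window.

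\textbf{The missing idea} is a global contradiction argument.
\begin{enumerate}
\item Assume no cycle length lies in $[k,3k/2]$, so every face has length at most $k-1$ or at least $3k/2+1$.
\item Gluing a face of length at most $k/2+3$ onto a cycle of length at most $k-1$ cannot jump over the window. From this one shows that every interior face of length at least $k/2+4$ has four independent edges whose opposite faces also have length at least $k/2+4$, or are the outer face $X$.
\item Orient these adjacencies and apply the acyclic-subdigraph Lemma. This gives a planar graph with minimum degree $4$ at the large faces.
\item Euler's formula, with a doubling along $E(X)$, forces a facial triangle.
\item Acyclicity then produces a face $F$ with independent edges $e_1,e_2$ such that $F^{e_1}$ and $F^{e_2}$ intersect. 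This contradicts cyclic $4$-edge-connectivity.
\end{enumerate}
That is the real role of the Lemma and of Euler's formula. Your orientation (``cutting off $f$ would destroy $g$'s boundary contact'') is not well defined and leads nowhere. The Three Edge Lemma is not needed for this statement at all; the paper uses it only in the proof of the Theorem.

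\textbf{Step 1.} You assert that the weak dual of the disk bounded by $C$ is a tree, while also allowing interior vertices of degree $3$. The weak dual is a tree only when there are no interior vertices, and a longest cycle need not have this property. The paper gets an outerplanar disk differently. Among cycles of length at least $k$, it takes one enclosing the fewest faces. An interior vertex, joined to $C$ by three internally disjoint paths (by $3$-connectivity of $Y$), would yield a cycle of length at least $\frac23\ell(C)>k$ enclosing fewer faces. This is also how the paper produces the first face of length at least $k/2+4$.

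\textbf{Parity.} Your parity remark is off target. $H$ is not bipartite, and the evenness of $k$ is used only arithmetically, so that quantities such as $3k/2$ and $k/2-1$ are integers.
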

\begin{proof}
	Since $Y$ is a cyclically $4$-edge connected cubic planar graph with girth at least $4$, we have that $H$ is 2-connected. Moreover, $H$ has exactly four degree 2 vertices, which are contained in a facial cycle in $H$, denoted as $X$. We take an embedding of $H$ such that $X$ is the exterior facial cycle.
	
Suppose, for contradiction, that $H$ has no cycle length in $[k, 3k / 2]$. Since $Y$ has at least six faces of length 4 or 5 when $k = 4$, we assume $k \ge 6$. 

	Let $\mathcal{X}_1$ be the set of interior facial cycles of length in $[k / 2 + 4, k - 1]$ and $\mathcal{X}_2$ be the set of interior facial cycles of length at least $3k / 2 + 1$. Moreover, let $\mathcal{X} := \mathcal{X}_1 \cup \mathcal{X}_2$ be the set of interior facial cycles of length at least $k / 2 + 4$. 

    \begin{claim}
        $\mathcal{X} \neq \emptyset$.
    \end{claim}
    \begin{proofblack}
        Consider a cycle $C$ in $H$ such that $\ell(C) \ge k$ and, subject to this, the interior of $C$ contains a minimum number of faces. By our assumption, $C$ has length at least $3k / 2 + 1$. If $C$ contains some vertex $v$ in its interior, then there exist distinct $v_1, v_2, v_3 \in V(C)$ such that $v$ joins to $v_1, v_2, v_3$ with three internally disjoint paths as $Y$ is a $3$-connected planar graph. It is evident that the union of $C$ and these three paths contains a cycle of length at least $\frac{2}{3} \ell(C) > k$ and with fewer faces in its interior than $C$, which contradicts our choice. Hence, $C$ together with its interior induces an outerplanar graph. If $C$ has a chord $e$ lying in its interior, we denote by $C_1$ and $C_2$ the cycles containing $e$ in $C + e$. Indeed, $e$ can be chosen such that $C_1$ is a facial cycle. By the choice of $C$, $\ell(C_2) \le k - 1$. Since $(\ell(C_1) - 1) + (\ell(C_2) - 1) = \ell(C) \ge \frac{3k}{2} + 1$, we have $\ell(C_1) \ge \frac{3k}{2} + 3 - \ell(C_2) \ge \frac{k}{2} + 4$. Hence, $C_1$ is a facial cycle of length at least $\frac{k}{2} + 4$. If $C$ has no chord in its interior, then $C$ is the desired facial cycle of length at least $\frac{3k}{2} + 1 > \frac{k}{2} + 4$. 
        \end{proofblack}

	As $Y$ is a $3$-connected cubic planar graph, the intersection of any two interior facial cycles of $H$ is either empty or induced by a single edge. For any facial cycle $F$ of $H$ and any edge $e$ in $F$, denote by $F^e$ the facial cycle containing $e$ other than $F$ in $H$. Since $Y$ is cyclically $4$-edge-connected, for any interior facial cycle $F$ of $H$ and any independent edges $e_1$ and $e_2$ of $F$ that are not contained in $X$, the facial cycles $F^{e_1}$ and $F^{e_2}$ are also interior facial cycles and hence disjoint. For the same reason, any interior facial cycle of $H$ can contain at most two edges from $X$. 
	
\begin{claim}
    For each $F \in \mathcal{X}$, $F$ contains four independent edges $e$ with $F^{e} \in \mathcal{X} \cup \{X\}$.
\end{claim}
\begin{proofblack}
    We first consider $F \in \mathcal{X}_1$. Notice that we have $k \ge 10$ because in this case $\mathcal{X}_1 \neq \emptyset$. Since $F$ has length $\ell(F) \ge k / 2 + 4$, it has $\lfloor (k / 2 + 4) / 2 \rfloor = \lfloor k / 4 \rfloor + 2 \ge 4$ independent edges. We claim that at least four of these independent edges $e$ satisfy $F^{e} \in \mathcal{X} \cup \{X\}$. Otherwise, we have $t_1 := \lfloor k / 4 \rfloor - 1$ independent edges $e_1, \dots, e_{t_1}$ of $F$ such that none of $F^{e_1}, \dots, F^{e_{t_1}}$ belongs to $\mathcal{X} \cup \{X\}$. So, $F^{e_1}, \dots, F^{e_{t_1}}$ are pairwise disjoint interior facial cycles each satisfying $4 \le \ell(F^{e_i}) \le k / 2 + 3$. For $i \in \{0, 1, \dots, t_1\}$, define $C_i$ to be the graph obtained from the multigraph $F \cup F^{e_1} \cup \dots \cup F^{e_i}$ by deleting all edges that appear more than once. It is obvious that every $C_i$ is a cycle. For $i \in \{0, 1, \dots, t_1 - 1\}$, if $\ell(C_i) \le k - 1$, then $\ell(C_{i + 1}) = \ell(C_i) + \ell(F^{e_{i + 1}}) - 2 \le 3k / 2$ and hence $\ell(C_{i + 1}) \le k - 1$. As $\ell(C_0) = \ell(F) \le k - 1$, we have that \begin{align*}
		\ell(C_{t_1}) \le k - 1.
	\end{align*} On the other hand, $\ell(C_{i + 1}) \ge \ell(C_i) + 2$ as $\ell(F^{e_{i + 1}}) \ge 4$. This leads to the contradiction that \begin{align*}
	\ell(C_{t_1}) &\ge \ell(F) + 2 t_1
	> \left(\frac{k}{2} + 4\right) + 2 \left(\frac{k}{4} - 2\right) = k.
\end{align*}

Now, we consider $F \in \mathcal{X}_2$. If $F$ does not have four independent edges $e$ with $F^{e} \in \mathcal{X} \cup \{X\}$, then, as $\ell(F) \ge \frac{3k}{2} + 1 \ge 10$, there exists some edge $e$ in $F$ with $F^e \notin \mathcal{X} \cup \{X\}$. Hence, there are at most six edges $e$ with $F^{e} \in \mathcal{X} \cup \{X\}$. Since the edges $e$ in $F$ with $F^e \notin \mathcal{X} \cup \{X\}$ induce at most three (disjoint) paths, there are $t_2 := \lceil(({3k}/{2} + 1) - 6) / 3\rceil = \frac{k}{2} - 1$ consecutive edges $e_1, \dots, e_{t_2}$ in $F$ such that $F^{e_i} \notin \mathcal{X} \cup \{X\}$ for every $i \in \{1, \dots, t_2\}$. We have $4 \le \ell(F^{e_i}) \le \frac{k}{2} + 3$. For $i \in \{1, \dots, t_2\}$, define $C_i$ to be the cycle obtained from the multigraph $F^{e_1} \cup \dots \cup F^{e_i}$ by deleting all edges that appear more than once. Similarly to the previous case, one can easily prove that for $i \in \{1, \dots, t_2 - 1\}$, $\ell(C_{i + 1}) \le k - 1$ whenever $\ell(C_{i}) \le k - 1$. Since $\ell(C_1) \le k - 1$, we have \begin{align*}
\ell(C_{t_2}) \le k - 1.
\end{align*} On the other hand, we have the following contradiction \begin{align*}
\ell(C_{t_2}) &\ge 2 + \sum_{i = 1}^{t_2} (\ell(F^{e_i}) - 2) \ge 2 + 2t_2 = k. \qedhere
\end{align*}
\end{proofblack}

We define a planar digraph $\mathcal{D}$ with vertex set $\mathcal{X} \cup E(X)$ as follows. For each $F \in \mathcal{X}$, choose four independent edges $e$ satisfying $F^e \in \mathcal{X} \cup \{X\}$ and for each such $e$, assign an edge directed from $F$ to $F^e$ if $F^e \in \mathcal{X}$ and to $e$ if $F^e = X$. We have $d_\mathcal{D}^+(F) = 4$ for any $F \in \mathcal{X}$, while $d_\mathcal{D}^+(e) = 0$ and $d_\mathcal{D}^-(e) \le 1$ for any $e \in E(X)$. Let $\mathcal{D}'$ be the spanning acyclic subdigraph of $\mathcal{D}$ assured by the Lemma. Let $\mathcal{U}$ be the underlying undirected planar graph of $\mathcal{D}'$. Hence $d_\mathcal{U}(F) \ge d_\mathcal{D}^+(F) = 4$ for any $F \in \mathcal{X}$ and $d_\mathcal{U}(e) \le 1$ for any $e \in E(X)$. Let $\mathcal{K}$ be any component of $\mathcal{U}$ that contains some vertex from $\mathcal{X}$. 

\begin{claim}
    $\mathcal{K}$ contains some face of length $3$.
\end{claim}

\begin{proofblack}
    Suppose to the contrary that all faces of $\mathcal{K}$ have length at least 4. Denote by $f_{\mathcal{K}}$ the number of faces of $\mathcal{K}$. 

If $|V(\mathcal{K}) \cap E(X)| \le 2$, then $2|E(\mathcal{K})| = \sum_{v \in V(\mathcal{K})} d_\mathcal{U}(v) \ge 4(|V(\mathcal{K})|-2) + 2$. By Euler's formula, we have
\begin{align*}
8 = 4|V(\mathcal{K})| - 4|E(\mathcal{K})| + 4 f_{\mathcal{K}} \le (2|E(\mathcal{K})| + 6) - 4|E(\mathcal{K})| + 2|E(\mathcal{K})| = 6,
\end{align*} which is absurd. 

Thus we have $|V(\mathcal{K}) \cap E(X)| \ge 3$. 

	Recall that $Y$ is a cyclically $4$-edge-connected cubic planar graph with girth at least 4. Among the four faces of $Y$ that are not in $H$, precisely two of them have disjoint facial cycles, which we denote by $D_1$ and $D_2$. In $H$, among the paths obtained from $X$ by removing the degree 2 vertices, let $P_1$ and $P_2$ be the paths contained in $D_1$ and $D_2$, respectively. If any interior facial cycle of $H$ contains two independent edges from $X$, then one of these edges must be contained in $P_1$ and the other in $P_2$.

Let $\mathcal{K}'$ be obtained from two disjoint copies of $\mathcal{K}$ by, for each vertex in $V(\mathcal{K}) \cap E(X)$, identifying the corresponding pair of vertices. The identified vertices are regarded as elements from $V(\mathcal{K}) \cap E(X)$. Define $\mathcal{Q}$ from $\mathcal{K}'$ by adding edges that form the cycle $\mathcal{O}$ on $V(\mathcal{K}) \cap E(X)$, induced by the cyclic order in which the elements of $V(\mathcal{K}) \cap E(X)$ occur in $X$. So $\mathcal{Q}$ is planar and has minimum degree at least 4. Again, it follows easily from Euler's formula that $\mathcal{Q}$ has at least eight facial cycles of length 3. Let $\mathcal{S}$ be a facial cycle of length 3 not contained in any of the two copies of $\mathcal{K}$. Then $\mathcal{S}$ must contain an edge of $\mathcal{O}$ and some vertex $F$ not in $\mathcal{O}$, and $F$ must correspond to an interior facial cycle in $H$ that contains one edge from $P_1$ and one edge from $P_2$. Thus, by planarity, in each copy of $\mathcal{K}$ there are at most two such vertices $F$, and hence $\mathcal{Q}$ has at most four facial cycles of length 3 not contained in any copy of $\mathcal{K}$. In particular, some copy of $\mathcal{K}$ contains a facial cycle of length 3. The claim thus follows.
\end{proofblack}

Let $\mathcal{S}$ be a facial cycle of length 3 in $\mathcal{U}$, as guaranteed by the preceding claim. We have $V(\mathcal{S}) \subseteq \mathcal{X}$ because $d_\mathcal{U}(e) \le 1$ for all $e \in E(X)$. Let $F \in \mathcal{X}$ be the vertex of $\mathcal{S}$ such that $d^+_{\mathcal{D}'[V(\mathcal{S})]}(F) = 2$, which exists as $\mathcal{D}'$ is acyclic. Therefore, $F$ has two independent edges $e_1$ and $e_2$ such that $F^{e_1}$ and $F^{e_2}$ are interior facial cycles in $H$ and intersect. However, this is impossible since $Y$ is a cyclically $4$-edge-connected cubic planar graph.
\end{proof}

The above proof extends to show that any cyclically $4$-edge-connected cubic planar graph with circumference at least \( k \), with \( k \geq 4 \) even, has a cycle length in \( [k, \frac{3k}{2}] \), though the tightness of this interval remains unknown. For an analogous study on other classes of $3$-connected planar graphs, we refer to~\cite{Cui2021, Lo2024}.

\subsection*{Cycle lengths in the line graph}

The following powerful tool, instrumental in finding long cycles in planar graphs, was established by Thomas and Yu~\cite{Thomas1994} and, independently, Sanders~\cite{Sanders1996}.

\begin{TEL} \label{lem:TEL}
	Let $H$ be a $2$-connected plane graph. Let $X$ be a facial cycle of $H$, and let $t, y, s$ be edges of $X$. There exists a cycle $C$ in $H$ containing $t, y, s$ such that every component of $H - V(C)$ joins to at most three vertices of $C$, and every component of $H - V(C)$ that contains some vertex of $X$ joins to at most two vertices of $C$.
\end{TEL}

We are now ready to prove  the Theorem. 

\begin{proof}[Proof of  the Theorem]
Let \( G \) be the line graph of a cyclically $4$-edge-connected cubic planar graph \( Y \), and $v$ be a vertex of $G$. The case where \( Y \cong K_4 \) trivially holds, so we assume \( Y \) has girth at least 4. We have \( n = |V(G)| \geq 12 \).

We divide the proof into three parts, establishing the existence of cycles of lengths $3$ and $5$, of lengths in $[6, \frac{2n}{3})$, and of lengths in $[\frac{2n}{3}, n]$ in $G - v$.

\smallskip

We first show that $G - v$ has an $l$-cycle for every $l \in \{3, 5\}$. The existence of a $3$-cycle in $G-v$ follows from the fact that $G$ has two disjoint facial $3$-cycles. Since $Y$ has two disjoint facial cycles each of length at most 5, $G$ must contain a facial cycle $Q$ of length 4 or 5 that does not contain $v$. If $Q$ has length 4, consider a facial $3$-cycle that intersects $Q$ but does not contain $v$. The union of $Q$ and this facial $3$-cycle clearly contains a 5-cycle. This proves the existence of a 5-cycle.

\smallskip

    Next, we prove the existence of the large cycle lengths. The arguments in this part were given in~\cite{Lo2021a}; we include them here for completeness.

	Recall that every vertex of $G$ has degree 4 and $E(G)$ can be partitioned into precisely $\frac{2n}{3}$ edge-disjoint facial $3$-cycles. So every vertex is contained in two of these cycles. Let $e_1, e_2$ be two edges incident with $v$ such that they induce a $3$-cycle with another edge $e_3$. 
	
We apply the Three Edge Lemma by taking \( H := G - e_3 \), \( X \) as the facial cycle of \( G - e_3 \) containing \( e_1 \) and \( e_2 \), and the three edges as \( e_1, e_2 \), and any other edge in \( X \). Let \( C \) be the guaranteed cycle. Suppose \( C \) is not a Hamilton cycle in \( G - e_3 \). Then there exists a component \( A \) of \( H - V(C) \). Since \( G \) is $4$-connected and \( \ell(C) > 3 \), \( A \) must join to at least four vertices of \( C \), contradicting the Three Edge Lemma. Thus $C$ is a Hamilton cycle in $G-e_3$ containing $e_1$ and $e_2$.

	We consider $C$ as a Hamilton cycle in $G$. For $j \in \{0, 1, 2\}$, a \emph{$j$-triangle} is one of the $\frac{2n}{3}$ facial $3$-cycles that contains precisely $j$ edges of $C$. It is clear that every facial $3$-cycle of $G$ is a 0-, 1-, or 2-triangle. The \emph{center} of a 2-triangle is the vertex incident to the two edges that belong to both $C$ and the 2-triangle. For instance, $v$ is the center of a 2-triangle. We can obtain a cycle whose length is one less than the original cycle by removing the center of a 2-triangle and adding the edge of the 2-triangle that is not in the original cycle. Denote by $\tau_j$ the number of $j$-triangles.    As $C$ is a Hamilton cycle, we have $n = |V(C)| = 2\tau_2 + \tau_1$. Observe that every 0-triangle contains three centers of 2-triangles and every 1-triangle contains one center. By counting the centers of 2-triangles, we obtain $\tau_2 = 3\tau_0 + \tau_1 \ge \tau_1$. It follows that $\tau_2 \ge \frac{n}{3}$, from which we conclude that $G - v$ has an $l$-cycle for every $l \in \{\frac{2n}{3}, \dots, n - 1\}$. 
	
\smallskip
    
    It remains to establish the cycle lengths in $[6, \frac{2n}{3})$. Suppose $l \in [6, \frac{2n}{3})$. Write $l = 3a + r$ with $r \in \{0, 1, 2\}$. Let $H$ be obtained from $Y$ by deleting the end-vertices of the edge that corresponds to $v$. We claim that $H$ has circumference at least $2a$. Denote by $v_1, v_2, v_3, v_4$ the degree 2 vertices of $H$ and by $X$ the facial cycle containing these vertices. Let $e_i$ be an edge incident with $v_i$ in $X$. Let $C$ be a cycle containing $e_1, e_2, e_3$ as assured by the Three Edge Lemma. Consequently, $C$ contains $v_1, v_2, v_3$. As $Y$ is cyclically $4$-edge-connected, every component of $H - V(C)$ is an isolated vertex, and it joins to only two vertices of $C$ if and only if it is comprised of the vertex $v_4$. We have $\ell(C) \ge 3 + 3 \cdot (|V(H) \setminus V(C)| - 1) + 2 \cdot 1 = 3 |V(H) \setminus V(C)| + 2$. Therefore
	\begin{align*}
		\ell(C) &\ge \frac34|V(H)| + \frac12 = \frac34\left(\frac{2n}{3} - 2\right) + \frac12 = \frac{n}{2} - 1 \ge \frac23\left( \frac{2n}{3} - 1\right) \ge \frac23 l \ge 2a.
	\end{align*}

	Now, as $H$ has circumference at least $2a$, we apply the Proposition (with $k := 2a$) to conclude that $H$ has some cycle $C_H$ of length in $[2a, 3a]$. We consider $C_H$ as a cycle in $Y$.

As the edges of $Y$ correspond bijectively to the vertices of $G$, we do not distinguish between an edge of $Y$ and its corresponding vertex in $G$. Under this identification, the cyclic order of $E(C_H)$ along $C_H$ induces a cycle $C_G$ in $G$ on the vertex set $E(C_H) \subseteq V(G)$. 

It follows that $2a\le\ell(C_G) = \ell(C_H)\le 3a$. 

Since $C_H$ is a cycle in $H$, $C_G$ does not intersect any facial $3$-cycle containing $v$.

Every edge $e$ of $C_G$ lies in a unique facial $3$-cycle of $G$ whose intersection with $C_G$ consists precisely of $e$ and its end-vertices; let $s_e$ denote the vertex of this facial $3$-cycle other than the end-vertices of $e$. For any $e \in E(C_G)$, we have $s_e \ne v$, because $C_G$ does not meet any facial $3$-cycle containing $v$. If two distinct edges $e_1, e_2 \in E(C_G)$ satisfy $s_{e_1} = s_{e_2} = s$, then $s$, viewed as an edge of $Y$, is a chord of $C_H$. Conversely, for any chord $s$ of $C_H$ in $Y$, exactly two edges $e \in E(C_G)$ satisfy $s_e = s$.

Let $\Lambda$ be a maximal subset of $E(C_G)$ such that $s_{e_1} \ne s_{e_2}$ for all distinct $e_1, e_2 \in \Lambda$. Since $v_1, v_2, v_3$ lie on $C_H$ and are not incident with any chord of $C_H$ in $Y$, at least three edges $e \in E(C_G)$ have $s_e$ not identifying to a chord of $C_H$. Together with the observations above, this implies that $|\Lambda| - 3 \ge \ell(C_G) - |\Lambda|$. Thus $|\Lambda| \ge \left\lceil \tfrac{\ell(C_G)+3}{2}\right\rceil$.

For any subset $\Lambda' \subseteq \Lambda$, we can modify $C_G$ via $\Lambda'$ as follows. For each $e \in \Lambda'$, delete $e$ and add the vertex $s_e$ along with the two edges joining $s_e$ to the end-vertices of $e$. This produces a cycle on $V(C_G) \cup \{s_e : e \in \Lambda'\}$. Its length is $\ell(C_G) + |\Lambda'|$, and it avoids $v$. 

Varying $\Lambda'$ over the subsets of $\Lambda$ shows that $G-v$ contains cycles of every length from $\ell(C_G)$ to $\ell(C_G) + |\Lambda|$. Since
\begin{align*}
\ell(C_G)&\le 3a \le l \le 3a+2 = 2a + \left\lceil \tfrac{2a+3}{2}\right\rceil \le \ell(C_G) + \left\lceil \tfrac{\ell(C_G)+3}{2}\right\rceil \le \ell(C_G) + |\Lambda|,
\end{align*}
this establishes the existence of an $l$-cycle in $G-v$.
\end{proof}

\section*{Acknowledgments}
The author thanks the two anonymous referees for their careful reading of the manuscript and for their invaluable comments, which significantly improved the presentation.

\bibliographystyle{abbrv}
\bibliography{paper}

\end{document}